\numberwithin{equation}{section}
\numberwithin{subsection}{section}
\newtheorem*{namedtheorem}{\theoremname}
\newcommand{\theoremname}{testing}
\newtheorem{theorem}[subsection]{Theorem}
\newtheorem{proposition-definition}[subsection]
{Proposition-Definition}
\newtheorem{corollary}[subsection]{Corollary}
\newtheorem{lemma}[subsection]{Lemma}
\theoremstyle{definition}
\newtheorem{remark}[subsection]{Remark}
\theoremstyle{remark}
\newcommand\cF{\mathcal{F}}
\newcommand\cH{\mathcal{H}}
\newcommand\cJ{\mathcal{J}}
\newcommand\cM{\mathcal{M}}
\newcommand\cO{\mathcal{O}}
\newcommand\cS{\mathcal{S}}
\newcommand\cU{\mathcal{U}}
\newcommand\cW{\mathcal{W}}
\newcommand\cZ{\mathcal{Z}}
\newcommand\CC{\mathbb{C}}
\newcommand\HH{\mathbb{H}}
\newcommand\PP{\mathbb{P}}
\newcommand\ZZ{\mathbb{Z}}
\newcommand\fR{\mathfrak{R}}
\newcommand\fS{\mathfrak{S}}
\newcommand\fX{\mathfrak{X}}
\newcommand\sse{\overline{{\rm Sec}}}
\newcommand\bse{{\rm Bse}}
\newcommand\se{{\rm Sec}}
\newcommand\Sch{ \rm{Sch}}
\newcommand\Set{{\rm Set}}
\newcommand\Pic{{\rm Pic}}
\newcommand{\talpha}{\tilde{\alpha}}
\theoremstyle{plain}
\theoremstyle{definition}
\begin{document}

\title{Integral Hodge classes on fourfolds fibered by quadric bundles}
\author{Zhiyuan Li,~~Zhiyu Tian}

\address{Department of Mathematics\\
building 380\\
Stanford, CA 94305\\
U.S.A.}

\address{Department of Mathematics\\California Institute of Technology\\ Pasadena, CA,
91125\\
U.S.A.}

\begin{abstract}
We discuss the space of sections and certain bisections on a quadric surfaces bundle $X$ over a smooth curve. The Abel-Jacobi from these spaces to the intermediate Jacobian will be shown to be  dominant with rationally connected fibers.  As an application, we prove that the integral Hodge conjecture holds for degree four integral Hodge classes of fourfolds fibered by quadric bundles over a smooth curve. This  gives an alternative proof of a result of Colliot-Th{\'e}l{\`e}ne and Voisin.
\end{abstract}
\maketitle

\section{Introduction}
Let $X$ be a smooth complex projective variety of dimension $n$. We denote by $Hdg^{2k}_\ZZ(X)=H^{k,k}(X)\cap H^{2k}(X,\ZZ)$ the group of integral Hodge classes of degree $2k$, and set 
$$Z^{2k}(X):=Hdg^{2k}_\ZZ(X)/\left<Z\subset X, Z\text{ has codimension k}\right>.$$
Then the Hodge conjecture  holds for degree $2k$ Hodge classes (resp. integral Hodge classes) if and only if the group $Z^{2k}(X)$ is torsion (resp. trivial).
In case that $k=2$ or rather for $X$ being a fourfold, the Hodge conjecture could be  closely related to the rational connectedness of $X$. For instance, it has been proved by Bloch and Srinivas \cite{BS83} that the Hodge conjecture holds for degree 4 Hodge classes on rationally connected varieties and also on  uniruled fourfolds using the diagonal decomposition. It remains  interesting to know whether Hodge conjecture holds for degree 4 integral Hodge classes (IHC) on such fourfolds.  

Recently, there are several methods to detect this problem. In \cite{CV12}, Colliot-Th{\'e}l{\`e}ne and Voisin have shown that $Z^4(X)=0$  when $X$ admits a quadric surfaces fibration over some surface via a K-theoretic approach. In this paper, we give an alternative proof of Hodge conjecture for IHC on such fourfolds via  a geometric approach developed by Voisin.  The main results are:

\begin{theorem}\label{thm:0} Let $f:X\rightarrow B$ be a family of quadric surfaces over $B$ with at worst quadric cone. Let $\se(X/B,h)$ (resp.~$\bse(X/B,h)$) be the space of sections (resp.~special bisections) (see $\S$4) of $f$. Then the morphism from $\bse(X/B,h)$ or $\se(X/B,h)$  to the torsor  $J(X)_h$ defined by Deligne-cycle  map is dominant and each fiber is rational. 
\end{theorem}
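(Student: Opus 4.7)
The plan is to follow Voisin's infinitesimal approach to the integral Hodge conjecture: exhibit a cycle whose Abel--Jacobi differential is surjective, which gives dominance, and then parametrize the fibers by an explicit rational construction using the ruling geometry of the quadric fibers. The first step is to identify the target concretely. For a fourfold quadric surface bundle over a curve $B$, the intermediate Jacobian $J(X)$ is, up to isogeny, the Prym variety of the \'etale double cover $\tilde{B} \to B$ parametrizing the two rulings on smooth fibers, and the class $h$ selects the connected component giving the torsor $J(X)_h$. Write $\AJ$ for the morphism of the theorem.

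For dominance, I would compute the differential of $\AJ$ at a sufficiently general section $\sigma$. The tangent space at $[\sigma]$ is $H^0(B, N_{\sigma/X})$, and by Griffiths' formula the differential factors through a cup-product pairing landing in $H^{2,1}(X)^{*}$. To show surjectivity, I would construct a $\sigma$ with sufficiently positive $N_{\sigma/X}$, obtained by smoothing a reducible curve built from a fixed section glued to lines in fibers of $f$; such lines exist since each smooth quadric surface contains two rulings of $\PP^1$'s, and quadric cones contain one. Since $J(X)_h$ is a torsor under an abelian variety, surjectivity of the differential at a single point yields dominance globally.

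The rationality of the generic fiber is the main obstacle. The key tool is a \emph{line-swap} construction: given a section $\sigma$ and a base point $b \in B$, the two lines on $Q_b = f^{-1}(b)$ through $\sigma(b)$ allow one to replace $\sigma(b)$ by any other point on either line, producing a new section differing from $\sigma$ by a cycle supported in $Q_b$. Iterating this operation at finitely many base points gives an algebraic family of sections whose $\AJ$-class differs from that of $\sigma$ by a computable combination of classes of rulings and relative fibers. I expect this to realize a general fiber $\AJ^{-1}(\alpha)$ as birational to an iterated projective bundle over a symmetric product of $\tilde{B}$, and hence to a projective space. The special bisection case should then reduce to the section case by base change to $\tilde{B}$, where a special bisection becomes a pair of sections of a related quadric bundle pulled back from $X$.

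The delicate part is extracting genuine rationality, rather than mere rational connectedness, from the line-swap construction. This requires a careful choice of auxiliary sections that simultaneously trivialize a potential Brauer obstruction coming from the ruling double cover and turn the natural parameter space into an honest iterated $\PP^1$-bundle over a rational base. A dimension count, comparing the Prym dimension $g(\tilde{B}) - g(B)$ against the Euler characteristic $\chi(N_{\sigma/X})$ and the input of the line-swap moduli, will be needed to verify that the constructed parametrization is generically finite, hence birational, onto the fiber; only then is the theorem established for both $\se(X/B,h)$ and $\bse(X/B,h)$.
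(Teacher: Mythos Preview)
Your route is quite different from the paper's, and the rationality step has a genuine gap.

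First, a few corrections to the setup: $X$ here is a threefold, not a fourfold; the double cover $C\to B$ parametrizing rulings is \emph{ramified} along the discriminant locus, not \'etale; and the cylinder homomorphism gives an honest isomorphism $J(X)\cong J(C)$, not merely an isogeny to a Prym.

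The paper never computes an Abel--Jacobi differential. Instead it exploits the fact that through any point of a smooth quadric surface there pass exactly two lines, one from each ruling, to produce a bijection
\[
\theta:\ \se(X/B)\ \longrightarrow\ \se(\cF/C)
\]
between sections of $X\to B$ and sections of the ruled surface $\cF\to C$ (the Fano scheme of fiber-lines). This is upgraded to a scheme isomorphism via Weil restriction. One then quotes Hassett--Tschinkel: for a ruled surface over a curve, the map $\se(\cF/C,d)\to J(C)$ sending a section to its intersection class with a fixed section is, for $d\gg 0$, an open immersion followed by a projective bundle. The only real work is a commutativity lemma showing that under $\theta$ and the cylinder isomorphism $J(C)\cong J(X)$ this map agrees with $\phi_h$. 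Rationality of the fibers is then automatic: they are projective spaces (linear systems on $C$).

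Your line-swap construction is, pointwise, exactly the bijection $\theta$; but the sentence ``birational to an iterated projective bundle over a symmetric product of $\tilde B$, and hence to a projective space'' does not hold as written, since $\mathrm{Sym}^n C$ is not rational once $g(C)>0$. What saves the day is that for $n\gg 0$ the Abel map $\mathrm{Sym}^n C\to J(C)$ is itself a projective bundle and your map to $J(X)$ should factor through it; but making this precise and compatible with $\phi_h$ is exactly the Hassett--Tschinkel input plus the paper's commutativity lemma. Without that, you have neither dominance nor rationality of fibers, and the infinitesimal surjectivity and Brauer discussion are detours that do not close the gap.

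Your reduction for bisections is on target and matches the paper: base-change to $C$ and modify to get a \emph{smooth} quadric bundle $Y\to C$, under which special bisections of $X/B$ become sections of $Y/C$, so the section case applies verbatim.
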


 \begin{theorem}\label{thm:1} Let $\PP(V)\rightarrow B$ be a $\PP^3$-bundle over a smooth projective curve $B$. 
Let $\pi:\fX\rightarrow T$ be a projective good family of quadric surfaces bundles in $\PP(V)$. Suppose that general fiber $\fX_t$ of $\pi$ satisfies the condition above. Then $Z^4(\fX)=0$. 
\end{theorem}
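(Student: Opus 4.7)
The plan is to apply Voisin's normal-function method to $\fX$, with Theorem~\ref{thm:0} as the geometric input that drives the Abel--Jacobi half of the argument. By a standard Lefschetz-style reduction (restricting to a general complete-intersection curve in $T$ when $\dim T > 1$), it suffices to treat the case where $T$ is a smooth projective curve, so that $\fX$ is a fourfold fibered by the threefolds $\fX_t$, each a quadric surface bundle $f_t : \fX_t \to B$.

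Let $\alpha \in Hdg^4_\ZZ(\fX)$. The Leray filtration for $\pi : \fX \to T$ decomposes $H^4(\fX,\ZZ)$ (up to extension) into graded pieces in $H^p(T, R^{4-p}\pi_*\ZZ)$ for $p = 0, 1, 2$. I would first dispose of the two ``outer'' pieces. The $p=0$ term is a section of integral Hodge classes in $H^4$ of the threefold fibers, and since each $\fX_t$ is a uniruled quadric bundle the integral Hodge conjecture in degree four is known there (classes come from lines in the quadric fibers, components of singular fibers, and sections of $f_t$), so this part can be spread to a codimension-two algebraic cycle on $\fX$. The $p = 2$ term lies in $H^2(T, R^2\pi_*\ZZ)$; integral Hodge classes here come from monodromy-invariant classes in $H^2(\fX_t,\ZZ)$, which are generated by divisor classes (the polarization, the relative hyperplane class, and components of reducible fibers of $f_t$), and tensoring each such class with a point class on $T$ produces algebraic representatives.

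The heart of the argument is the middle piece in $H^1(T, R^3\pi_*\ZZ)$, which records a normal function $\nu_\alpha : T \to J(\fX/T)_h$ into the relative intermediate-Jacobian torsor (where $h$ is the monodromy-invariant homology class of a section or special bisection). Theorem~\ref{thm:0} yields a fiberwise dominant Abel--Jacobi map
\[
\se(\fX_t/B, h) \longrightarrow J(\fX_t)_h \qquad \text{(or the $\bse$ analogue)}
\]
with rational (hence rationally connected) fibers; spreading this over $T$ yields a relative morphism $\Phi$ over $T$ whose geometric generic fiber is rationally connected. Pulling $\Phi$ back along the section $\nu_\alpha$ gives a family of rationally connected varieties over the curve $T$, which admits a section by Graber--Harris--Starr. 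That section produces a family of sections/bisections of $f_t : \fX_t \to B$ parametrized by $T$, hence a codimension-two algebraic cycle $Z \subset \fX$; the compatibility of the Abel--Jacobi map with the Deligne cycle-class map (built into Theorem~\ref{thm:0}) forces the normal function of $[Z]$ to equal $\nu_\alpha$. Combined with the cycles produced for the $p = 0$ and $p = 2$ pieces, this shows $\alpha$ is algebraic, so $Z^4(\fX) = 0$.

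The main obstacle is making the argument work integrally rather than just rationally: verifying that $h$ is monodromy-invariant over $T$ so that $J(\fX/T)_h$ is globally defined, that the normal function $\nu_\alpha$ lifts to an \emph{integral} section of $\Phi$ (not merely a section of its Jacobian fibration after tensoring with $\QQ$), and that the extensions in the Leray spectral sequence do not introduce unaccounted torsion. A secondary technical point is extending the relative Abel--Jacobi picture across the discriminant loci of $\pi$ and of the $f_t$; this is exactly where the ``good family'' hypothesis on $\pi$ and the ``at worst quadric cone'' condition on the fibers of $f_t$ in Theorem~\ref{thm:0} come into play.
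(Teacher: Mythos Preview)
Your outline is in the right spirit, but it unpacks machinery that the paper simply cites, and it leaves one reduction step implicit that the paper makes explicit.

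The paper does not run the Leray spectral sequence. It invokes Voisin's criterion (Theorem~\ref{Voisincriterion}) as a black box: $Z^4(\fX)=0$ as soon as, for every section $\tilde\alpha$ of $R^4\pi_*\ZZ$, one produces a family of relative $1$-cycles of class $\tilde\alpha$ whose Abel--Jacobi map to the torsor fibration $\cJ_{\tilde\alpha}$ is dominant with rationally connected general fiber. Your Leray/normal-function/Graber--Harris--Starr argument is exactly the \emph{proof} of that criterion, so you are re-deriving it rather than applying it. (Also, in this theorem $\fX$ is already a fourfold, so $T$ is a curve and your Lefschetz reduction is unnecessary.)

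The step the paper does that you do not is the reduction of which $\tilde\alpha$ must be checked. Corollary~\ref{simplecriterion} says condition~$(\ast)$ need only be verified modulo sections of $R^4\pi_*\ZZ$ coming from cup-products in $\Pic(\fX)$. There are global divisors $\cH_1,\cH_2$ on $\fX$ restricting to the relative hyperplane and fiber classes on $\fX_t$, and $[\cH_1^2]$, $[\cH_1\cdot\cH_2]$ give fiberwise classes of bidegree $(k,2)$ and $(2,0)$ in $H^4(\fX_t,\ZZ)\cong\ZZ^2$. Modulo these, every $\tilde\alpha$ becomes one of bidegree $(1,h)$ or $(2,h)$ with $h\gg 0$, and then Theorem~\ref{thm:0} supplies the required family via the relative Hilbert scheme of sections (for $(1,h)$) or special bisections through the cone vertices (for $(2,h)$). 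Your sketch asserts that the normal function lands in $J(\fX/T)_h$ for ``the class of a section or special bisection'' without saying how an arbitrary $\alpha$ is moved into that range or why $h$ may be taken large; this is the point that needs to be filled in, and it is exactly what the divisor-class subtraction accomplishes.

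Finally, the obstacles in your last paragraph are already absorbed by the good-family hypothesis: $R^4\pi_*\ZZ$ is assumed \emph{trivial}, so monodromy-invariance of $h$ is automatic; torsion-freeness of $H^3(\fX_t,\ZZ)$ and vanishing of $H^{i}(\fX_t,\cO_{\fX_t})$ for $i=2,3$ are part of the definition and are precisely what make Voisin's criterion apply integrally; and the at-worst-ODP condition on fibers handles the extension across the discriminant.
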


\begin{remark}The notion of a family being {\it good} is given in $\S$1. Roughly speaking, it means that the local system $R^4\pi_\ast\ZZ$ is trivial and all the fibers of the family have at worst ordinary double points as singularities.  
\end{remark}

\begin{remark}
	One can see that the fourfold $\fX$ is birational to a quadric surface bundle over a surface and hence  Theorem 1.2 can be implied by Colliot-Th{\'e}l{\`e}ne and Voisin's result. But we hope that the geometric method can be applied  to fourfolds admitting a toric surfaces fibration.
\end{remark}


\subsection{Organization of the paper} In section 2, we explain Voisin's method of  proving Hodge conjecture of integral Hodge classes on families of threefolds. The principal of her method is to find  sufficient many families of curves on a threefold $X$, which admit a rationally connected fibration onto its intermediate Jacobian $J(X)$ via Abel-Jacobi map. 

We review the classical geometry of quadric bundlesin  Section 3 and give some natural geometric modification of the quadric bundle.   In section 4, we show that the morphism induced by  Abel-Jacobi map agrees with the map defined by Hassett and Tschinkel \cite{HT12} (See also \cite{Zh12}).  Theorem \ref{thm:0} is proved in Theorem 4.4 and Theorem 4.13. The main theorem is proved in the last section.

\subsection{Acknowledgment} The authors benefits from the discussion with Brendan Hassett, Yi Zhu and Jun Li. The authors are also very grateful to Colliot-Th{\'e}l{\`e}ne for his helpful comments and explanation of the approach in \cite{CV12}. 

\section{Intermediate Jacobian and Abel-Jacobi map}

\subsection{Intermediate Jacobian}
Let $X$ be a smooth projective threefold. The intermediate Jacobian 
$$J(X):=H^3(X,\CC)/(F^2H^3(X)+H^3(X,\ZZ)),$$ is a compact torus. In particular, when $H^{3,0}(X)=0$ (e.g. when $X$ is rationally connected), the intermediate Jacobian $J(X)$ is an abelian variety.   It fits into the exact sequence $$0\rightarrow J(X)\rightarrow H_D^4(X,\ZZ(2))\rightarrow Hdg_\ZZ^4(X)\rightarrow 0,  $$
 where $H_D^4(X,\ZZ)=\HH^4(0\rightarrow\ZZ\rightarrow \cO_X\rightarrow \Omega_X\rightarrow 0)$ is the Deligne cohomology group.
 
For any integral Hodge class $\alpha\in Hdg^4_\ZZ(X)$, the torsor $J(X)_\alpha$ is defined to be the preimage of $\alpha$ in $H_D^4(X,\ZZ(2))$. Let $CH^2(X)_{\alpha}$ be the set of codimension two cycles in the Chow group whose cycle class is $\alpha$. Then there is a Deligne cycles class map
$$c_X:CH^2(X)_\alpha\rightarrow J(X)_\alpha\subseteq H^4_D(X,\ZZ(2)).$$
If $\alpha=0$, it can be identified with the Abel-Jacobi map $AJ_X:CH^2(X)_0\rightarrow J(X)$ introduced by Griffiths \cite{Gr69}. 

Let  $Z\subset M\times X$ be a family of 1-cycles of class $\alpha$, i.e. $[Z_b]=\alpha$ in $H^4(X,\ZZ)$ for all $b\in M$.  Then the Deligne cycle class map induces a morphism (cf.~\cite{Vo1})
\begin{equation}\label{eq2.1}
 \phi_Z:M\rightarrow J(X)_\alpha,
\end{equation}
defined by $\phi_Z(b)=c_\alpha(Z_b)$.

\begin{remark}\label{AJmap} Fix $b_0\in M$,  we can also define a map
\begin{equation}
\begin{aligned}
 \phi^{b_0}_Z:M&\rightarrow J(X)\\
 b&\mapsto AJ_X([Z_b]-[Z_{b_0}]),            
\end{aligned}
\end{equation}
which has the same fiber as $\phi_Z$. For simplicity,  we continue to use $\phi_Z$ to denote this morphism.
\end{remark}
 
These constructions naturally extend to the relative situation. Namely, let $f:\fX\rightarrow T$ be a family of rationally connected threefolds over a smooth curve $T$,  and assume that $f$ is smooth over $T_0\subset T$. For any section $\tilde{\alpha}$ of the local system $R^4f_\ast \ZZ$,  we get a family of torsors 
$$\cJ_{\tilde{\alpha}}\rightarrow T_0,$$ whose fiber over $t\in T_0$ is $J(\fX_t)_{\tilde{\alpha}(t)}$. 
Given a variety $\cM$ over $T$ and a family of relative 1-cycles $\cZ \subset \cM\times_T \fX$ of class  $\tilde{\alpha}$,  i.e. $[\cZ_t]=\tilde{\alpha}(t)$,  the restriction to $T_0$ gives a morphism 
\begin{equation}
\label{eq2.2} \Phi_\cZ:\xymatrix{ \cM\times_T T_0\ar[r]^{~}  \ar[d]^~ &\ \cJ_{\tilde{\alpha}}\ar[dl]^{~}\\ T_0  &~  } 
 \end{equation}
 In this paper, we are interested in the geometry of the map \eqref{eq2.2}. 
 
\subsection{Voisin's criterion}  
As discussed in \cite{Vo13},  the rational connectedness of the general fiber of \eqref{eq2.2} is closely related to integral Hodge conjectures on $\fX$.

Before we proceed,  let us first make some assumptions on our family $\fX$. We say that $\fX\rightarrow T$ is a {\it good} family if it satisfies the following conditions:
\begin{enumerate}
  \item $R^4f_\ast \ZZ$ is trivial;
  \item  $H^2 (\fX_t, \cO_{\fX_t})=H^3(\fX_t,\cO_{\fX_t})=0$; and $H^3(\fX_t,\ZZ)$ is torsion free for any smooth fiber $\fX_t$;
  \item Every fiber has at worst one ordinary double point as singularities.
\end{enumerate}
\begin{remark}
In the case of Theorem \ref{thm:1},  we know that  $H^\ast(\fX_t, \cO_{\fX_t})=0$ because of the uniruledness.  Moreover, since the integral cohomology group of $\PP(V)$ is torsion free, 
 $H^3(\fX_t,\ZZ)$ is automatically torsion free by Lefschetz hyperplane theorem and universal coefficients theorem.
\end{remark}

When $\fX\rightarrow T$ is a good family,  the following criterion is proved in \cite{Vo13}: 

\begin{theorem}\label{Voisincriterion}The group $Z^4(\fX)$ is trivial if for any section $\tilde{\alpha}$ of $R^4f_\ast \ZZ$, the following condition holds:

($\ast$) There exists a variety $g_{\tilde{\alpha}}: \cM_{\tilde{\alpha}}\rightarrow T_0$ and a family of relative  1-cycle $\cZ_{\tilde{\alpha}} \subset \cM_\alpha\times_T \fX$ of class $\tilde{\alpha}$, such that the morphism $\Phi_{\cZ_{\tilde{\alpha}}}:\cM_{\tilde{\alpha}}\rightarrow \cJ_{\tilde{\alpha}}$ is surjective with rationally connected general fibers. 
\end{theorem}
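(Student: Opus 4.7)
The plan is as follows. Fix any $\alpha \in Hdg^4_\ZZ(\fX)$; the aim is to represent $\alpha$ by an algebraic cycle. By condition (1), the trivialization of $R^4f_\ast\ZZ$ provides a global section $\tilde\alpha \in H^0(T,R^4f_\ast\ZZ)$ with $\tilde\alpha(t)=\alpha|_{\fX_t}$ for each smooth fiber, while condition (2) forces $H^4(\fX_t,\CC)=H^{2,2}(\fX_t)$, so $\tilde\alpha(t)$ is automatically a Hodge class. Lift $\alpha$ to Deligne cohomology via the surjection $H^4_D(\fX,\ZZ(2))\twoheadrightarrow Hdg^4_\ZZ(\fX)$ from the introduction; restricting to smooth fibers defines a holomorphic section, the normal function
$$\nu_\alpha : T_0 \longrightarrow \cJ_{\tilde\alpha}$$
of the family of torsors built in $\S 2$.

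Next, I would invoke hypothesis $(\ast)$. Form the base change
$$\cM'_\alpha \;:=\; \cM_{\tilde\alpha}\times_{\cJ_{\tilde\alpha}} T_0 \;\longrightarrow\; T_0,$$
whose general fiber is a general fiber of $\Phi_{\cZ_{\tilde\alpha}}$ and is therefore rationally connected. Choose a smooth projective model $\overline{\cM'_\alpha}\to T$ extending this family across the finitely many singular values of $f$, and apply the Graber--Harris--Starr theorem to obtain a section $\sigma:T\to \overline{\cM'_\alpha}$. Over $T_0$ this gives a section of $\cM_{\tilde\alpha}\to T_0$ lifting $\nu_\alpha$; composing with the universal cycle $\cZ_{\tilde\alpha}\subset \cM_{\tilde\alpha}\times_T\fX$ produces a relative $1$-cycle over $T_0$, whose Zariski closure yields a codimension-two algebraic cycle $Z\subset \fX$. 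By construction, for every $t\in T_0$ the Deligne cycle class of $Z\cdot\fX_t$ equals $\nu_\alpha(t)$; in particular the cohomology class of $Z\cdot\fX_t$ equals $\tilde\alpha(t)=\alpha|_{\fX_t}$.

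Finally, compare $[Z]$ and $\alpha$ inside $H^4(\fX,\ZZ)$. They agree after restriction to every smooth fiber, so $\alpha-[Z]$ lies in the kernel of $H^4(\fX,\ZZ)\to H^0(T,R^4f_\ast\ZZ)$; Deligne's degeneration of the Leray spectral sequence identifies this kernel with $H^1(T,R^3f_\ast\ZZ)$, and the matching of Deligne classes (not just cohomology classes) via $\nu_\alpha$ kills the part coming from smooth fibers, leaving only a residual contribution concentrated at the singular fibers of $f$. This is the step I expect to be the main obstacle. Here condition (3) enters crucially: each singular fiber has only an ordinary double point, so a small resolution and vanishing-cycle analysis describe the local contribution to $H^4(\fX,\ZZ)$ as being spanned by algebraic classes (components of exceptional divisors together with pullbacks from $T$ intersected with relative hyperplane sections). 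Adding these algebraic corrections to $[Z]$ produces an algebraic representative of $\alpha$, hence $Z^4(\fX)=0$.
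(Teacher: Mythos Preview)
The paper does not give its own proof of this statement: immediately before Theorem~\ref{Voisincriterion} it says ``the following criterion is proved in \cite{Vo13}'', and no argument is supplied. So there is nothing in the paper to compare your proposal against; the theorem is simply quoted from Voisin.

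That said, your outline is essentially the skeleton of Voisin's original argument (normal function from a Deligne lift, Graber--Harris--Starr applied to the fibered product $\cM_{\tilde\alpha}\times_{\cJ_{\tilde\alpha}}T_0$, then a comparison in $H^4$). Two points where your sketch departs from how the argument actually goes:
\begin{enumerate}
\item You invoke condition~(3) only at the very end, as a residual correction supported on singular fibers. In Voisin's proof the ODP hypothesis is used \emph{earlier}: it guarantees that the torsor $\cJ_{\tilde\alpha}\to T_0$ admits a suitable projective compactification over $T$ and that the normal function $\nu_\alpha$ extends across the bad points. Without this, your base change $\cM'_\alpha\to T_0$ need not admit a smooth projective model over $T$ with rationally connected general fiber, and the Graber--Harris--Starr step has no input.
\item Your identification of the kernel of $H^4(\fX,\ZZ)\to H^0(T,R^4f_\ast\ZZ)$ with $H^1(T,R^3f_\ast\ZZ)$ is incomplete: since $T$ is a curve the Leray filtration has a further graded piece $H^2(T,R^2f_\ast\ZZ)$, which must also be shown to be algebraic (it is, being generated by classes of the form $[\mathrm{pt}]\cdot(\text{divisor class on a fiber})$).
\end{enumerate}
With these two adjustments your plan coincides with Voisin's.
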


Moreover, every algebraic cycle $W\in CH^2(\fX)$ will induce a section $[W]$ of $R^4f_\ast \ZZ$. Suppose that condition $(\ast)$ holds for some section $\tilde{\alpha}$, then it holds for  the section $\tilde{\alpha}':=\pm\tilde{\alpha}+n[W]$ for all $n\in \ZZ$. This is because one can just take 
$$\cM_{\tilde{\alpha}'}=\cM_{\tilde{\alpha}}  ~and ~\cZ_{\tilde{\alpha}'}=\pm\cZ_{\tilde{\alpha}}+n(\cM_{\tilde{\alpha}'}\times_T W),$$
which will naturally satisfy the condition $(\ast)$. Therefore, we can obtain  the following result:  
  
\begin{corollary}\label{simplecriterion}
Let  $A^4(\fX)\subseteq Hdg_\ZZ^4(\fX)$ be the image of cup product $$\Xi:\Pic(\fX)\times \Pic(\fX)\rightarrow Hdg_\ZZ^4(\fX).$$ 
Then $Z^4(\fX)=0$ if the condition $(\ast)$ holds for sections of $R^4f_\ast\ZZ$ modulo the sections induced from $A^4(\fX)$.
\end{corollary}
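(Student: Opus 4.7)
The plan is to deduce the corollary directly from Theorem~\ref{Voisincriterion} together with the closure property for condition $(\ast)$ observed in the paragraph preceding the corollary. By Theorem~\ref{Voisincriterion}, to prove $Z^4(\fX)=0$ it suffices to verify that $(\ast)$ holds for every section $\tilde{\alpha}$ of $R^4f_\ast\ZZ$; the point of the corollary is that one only has to check this for a set of coset representatives modulo the subgroup generated by sections coming from $A^4(\fX)$.

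First, I would make the hypothesis precise. Every element $\gamma\in A^4(\fX)$ is, by definition, the cup product of two divisor classes, hence represented by a genuine algebraic cycle $W\in CH^2(\fX)$; restricting fiberwise gives a section $[W]$ of $R^4f_\ast\ZZ$ (well-defined because of assumption (1) in the definition of a good family). Let $\Lambda\subseteq\Gamma(T,R^4f_\ast\ZZ)$ be the subgroup generated by all such $[W]$. The hypothesis of the corollary then says: for every section $\tilde{\alpha}$ there exist a section $\tilde{\beta}$ satisfying $(\ast)$ and cycles $W_1,\dots,W_r$ coming from $A^4(\fX)$ and integers $n_1,\dots,n_r$ such that
\[
\tilde{\alpha} \;=\; \tilde{\beta}+\sum_{i=1}^r n_i[W_i].
\]

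Second, I would iterate the closure observation already stated before the corollary: if $(\ast)$ is witnessed by $(\cM_{\tilde{\beta}},\cZ_{\tilde{\beta}})$, then for any $W\in A^4(\fX)$ and any $n\in\ZZ$ the same parameter variety $\cM_{\tilde{\beta}}$ together with the family
\[
\cZ_{\tilde{\beta}}\;+\;n\bigl(\cM_{\tilde{\beta}}\times_T W\bigr)
\]
witnesses $(\ast)$ for $\tilde{\beta}+n[W]$. Indeed, the induced map $\Phi$ to the torsor $\cJ_{\tilde{\beta}+n[W]}$ differs from $\Phi_{\cZ_{\tilde{\beta}}}$ by the translation coming from $W$, so its surjectivity and the rational connectedness of its general fibers are preserved. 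Applying this operation once for each summand $n_i[W_i]$ in the decomposition above shows that $(\ast)$ holds for $\tilde\alpha$ itself, and Theorem~\ref{Voisincriterion} then yields $Z^4(\fX)=0$.

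There is no serious obstacle here: the content is genuinely the combination of Voisin's criterion with an abelian-group closure property, and the only thing to verify carefully is that the decomposition $\tilde{\alpha}=\tilde{\beta}+\sum n_i[W_i]$ is finite (which it is, since it is an equality of sections of a finitely generated abelian group on a quasi-projective base) and that translation by an algebraic cycle preserves both surjectivity onto the Deligne torsor and rational connectedness of the generic fiber of $\Phi$, which is immediate from the construction of $\Phi$ via the Deligne cycle map.
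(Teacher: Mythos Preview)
Your proposal is correct and is essentially identical to the paper's own argument: the paper proves the corollary by the closure observation in the paragraph immediately preceding it (if $(\ast)$ holds for $\tilde\alpha$ then it holds for $\pm\tilde\alpha+n[W]$ via $\cM_{\tilde\alpha'}=\cM_{\tilde\alpha}$ and $\cZ_{\tilde\alpha'}=\pm\cZ_{\tilde\alpha}+n(\cM_{\tilde\alpha'}\times_T W)$), together with Theorem~\ref{Voisincriterion}. The only cosmetic difference is that you spell out the iteration over several summands $n_i[W_i]$ explicitly, whereas the paper leaves this implicit.
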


\section{Classical geometry on quadric surfaces bundles}
Assume that $\pi:X\rightarrow B$ is  quadric bundle whose singular fibers are  at worst  a quadric cones. 
For simplicity, we assume that $\pi:X\rightarrow \PP^1$ has non trivial monodromy. Then the cohomology group $H^4(X,\ZZ)\cong \ZZ^2$ has rank two.  

Denote by $\Delta \subset \PP^1$  the set of points where the fiber is singular and set $m=|\Delta|$ to be  the number of singular fibers. Then $m$ is an even integer by \cite{HT84}.
 The  double cover $$g:C\rightarrow B$$ ramified along $\Delta$ is called the {\it discriminant curve} of the family $X\rightarrow B$, endowed with an involution $\iota:C\rightarrow C$. 

\subsection{Fano scheme of lines}  Let $\cF$ be the space of lines in the fibers of $\pi$ and $\cU\subset \cF\times X$ the universal family.  It is well-known that $$\cF\rightarrow C$$ is a smooth $\PP^1$-bundle obtained from the Stein factorization of $\cF\rightarrow \PP^1$.  

\begin{remark}
If the morphism $\pi:X\rightarrow B$ is smooth, then $g:C\rightarrow B$ is an \'etale covering of degree two. Moreover, in the case $B=\PP^1$, $\pi$ is smooth if and only the fibration $X\rightarrow \PP^1$  has trivial monodromy.\end{remark}

Given a section $\delta:C\rightarrow \cF$, the restriction $F_\delta\subseteq \cU$ of the universal family  to the image $\delta(C)\cong C$ gives a one dimensional family of lines on $X$, that is, a diagram: 
\begin{equation}\label{cylinder}
\xymatrix{F_{\delta}  \ar[d]^{q_\delta }\ar[r]^{p_\delta} &  X\\  C&}.
\end{equation}
This yields a cylinder homomorphism 
\begin{equation}
\begin{aligned}
\Psi_\delta: H_1(C,\ZZ)&\rightarrow H_3(X,\ZZ)\\
\gamma &\mapsto p_\delta (q_\delta^{-1}(\gamma)),
\end{aligned}
\end{equation}
which is an isomorphism by \cite{T72} Lecture 5. Then we get an isomorphism $$J(X)\xrightarrow{\sim}J(C)$$ via this cylinder homomorphism. Moreover, using the identification of intermediate Jacobians or Jacobians via cycle class map, we can rewrite the above isomorphism in terms of cycles:
\begin{lemma}\label{isomorphism1}
The map
\begin{equation} \label{eq3.2}
\psi_\delta:J(C)\xrightarrow{\sim} J(X).
\end{equation}
defined by $\psi_\delta(\sum\limits_{i} n_i [c_i])=c_X(\sum\limits_{i} n_i [p_\delta (q^\ast_\delta(c_i))])$ is an isomorphism, where $c_i$ are points in $C$ and $\sum\limits_i n_i=0$.
\end{lemma}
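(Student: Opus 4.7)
The plan is to deduce the lemma from the already-quoted result of Tjurin that the cylinder map $\Psi_\delta: H_1(C,\ZZ) \to H_3(X,\ZZ)$ is an isomorphism of lattices. First I would invoke the standard correspondence formalism: the universal family $F_\delta \hookrightarrow C \times X$ is a codimension-two algebraic cycle, so it induces a morphism of pure integral Hodge structures $H^1(C,\ZZ) \to H^3(X,\ZZ)$ which, under Poincar\'e duality on both sides, coincides with $\Psi_\delta$. Taking the Hodge-theoretic quotients
\[
J(C) = H^1(C,\CC)/(F^1H^1(C) + H^1(C,\ZZ)),\qquad J(X) = H^3(X,\CC)/(F^2H^3(X) + H^3(X,\ZZ)),
\]
this produces a morphism of complex tori $J(C) \to J(X)$, and since $\Psi_\delta$ is a lattice isomorphism the resulting map is an isomorphism of complex tori.

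Next I would identify this induced morphism with the $\psi_\delta$ appearing in the statement. Fix a degree-zero divisor $D = \sum_i n_i c_i$ on $C$ together with a 1-chain $\gamma \subset C$ satisfying $\partial\gamma = D$; then $AJ_C(D) \in J(C)$ is represented by integration along $\gamma$. The pullback-pushforward $\Gamma := p_\delta(q_\delta^{-1}(\gamma))$ is a 3-chain in $X$ whose boundary is exactly the 1-cycle $\sum_i n_i\, \ell_{c_i}$, where $\ell_c := p_\delta(q_\delta^{-1}(c))$ is the line parametrized by $\delta(c)$. All such lines are mutually homologous and $\sum_i n_i = 0$, so this 1-cycle is homologous to zero, and $AJ_X$ is defined on it. By construction, integration over $\Gamma$ computes both $AJ_X(\sum_i n_i\,\ell_{c_i})$ and the cylinder image $\Psi_\delta(AJ_C(D))$; hence $\psi_\delta(AJ_C(D)) = AJ_X(\sum_i n_i\,\ell_{c_i})$, identifying $\psi_\delta$ with the isomorphism constructed in the previous step.

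The main technical point I anticipate is the chain-level compatibility $\partial(p_{\delta*}q_\delta^*\gamma) = p_{\delta*}q_\delta^*(\partial\gamma)$ underlying this comparison, which is what makes the correspondence intertwine the Abel-Jacobi maps on $C$ and $X$. This is routine since $q_\delta: F_\delta \to C$ is a smooth proper $\PP^1$-bundle and $p_\delta$ is proper. All remaining content — that $\psi_\delta$ is a morphism of complex tori, surjective, and injective — then follows formally from Tjurin's lattice isomorphism and the Hodge-theoretic identification above.
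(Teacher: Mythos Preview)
Your proposal is correct and follows the same approach as the paper. In fact, the paper does not give a separate proof of this lemma at all: it is stated immediately after recording that $\Psi_\delta:H_1(C,\ZZ)\to H_3(X,\ZZ)$ is an isomorphism by Tjurin, with the remark that ``using the identification of intermediate Jacobians or Jacobians via cycle class map, we can rewrite the above isomorphism in terms of cycles.'' Your argument simply unpacks this sentence --- correspondence formalism plus the chain-level compatibility of Abel--Jacobi maps --- so what you have written is a fleshed-out version of exactly what the paper takes for granted.
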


We may omit the notation $c_X$ in the latter sections for simplicity.

\subsection{Geometric modification}  The ruled surface $F_\delta$ obtained  in \eqref{cylinder} is birational to the Fano scheme of lines $\cF$. To describe this birational map, 
let us recall a useful modification of the  family $X\rightarrow B$ in \cite{HT12}. 

Let $\widetilde{Y}$ be the blow-up of $X\times_{B}C$ along the nodes of the singular fibers and let $\mu: \widetilde{Y}\rightarrow Y$ be the blowing-down of the strict transform of the singular fibers of $X\times_{B} C$.  Then $Y\rightarrow C$ is a smooth family of quadric surfaces with trivial monodromy. 

Moreover, the universal family $\cU$ is actually the  small resolution of $X\times_{B} C$, which blows up $X\times_{B} C$ along a family of lines on $X\times_{B}C$. Hence we have  a commutative diagram:
\begin{equation}\label{birational}
\xymatrix{ &\cU\ar[dl]_{q}\ar[d]^{p} &\widetilde{Y}\ar[dl]_{\eta}\ar[d]\ar[l]_{\mu'} \ar[r]^{\mu}&Y\\ \cF \ar[d]_{r} & X\ar[d]^{\pi} & X\times_{B}\ar[l] C\ar[d] &\\  C\ar[r]^{g} & B& C\ar[l] &}
\end{equation}
where $\mu': \tilde{Y}\rightarrow \cU $ is the blowing down the  one of  the rulings of the exceptional divisors over the nodes.

\begin{lemma}\cite{HVV11}
The Fano scheme of lines of the fibers of $Y\rightarrow C$ is a disjoint union of two ruled surfaces $\cF\cup \iota^\ast\cF$.  Each of them parametrizes one of the two rulings on the fibers of $Y\rightarrow C$ and the corresponding  universal family $\cW$ (or $\iota^\ast\cW$) is isomorphic to $Y$.
\end{lemma}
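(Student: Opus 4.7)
The plan is to analyze the relative Fano scheme of lines of $Y \to C$ fiber by fiber, then globalize using the triviality of the monodromy. Each fiber $Y_c$ is a smooth quadric surface in $\PP^3$, hence isomorphic to $\PP^1 \times \PP^1$, and its Fano scheme of lines is a disjoint union of two projective lines, one for each of the two rulings. Therefore the relative Hilbert scheme $F(Y/C) \to C$ is étale-locally a disjoint union of two smooth $\PP^1$-bundles over $C$. By construction the monodromy of $Y \to C$ is trivial (this is precisely why the double cover $C \to B$ was introduced: it trivializes the $\ZZ/2$-monodromy of the original family that swaps the two rulings in each fiber). Consequently no global monodromy exchanges the two fiberwise components, and $F(Y/C)$ splits as a disjoint union $F_1 \sqcup F_2$ of two irreducible ruled surfaces over $C$.

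Next I would identify $F_1$ with $\cF$. Over the open locus $C^{\circ} \subset C$ where $X_{g(c)}$ is smooth, the maps $\mu$ and $\mu'$ in diagram \eqref{birational} are both isomorphisms, yielding a canonical identification $Y_c \cong X_{g(c)}$ that matches lines with lines. Under this identification, $\cF|_c$ parametrizes one of the two rulings of $Y_c$ (and $\cF|_{\iota(c)}$ parametrizes the other, by the very definition of $C$ as the relative Fano scheme of rulings of $X/B$). This yields a morphism $\cF|_{C^{\circ}} \to F(Y/C)|_{C^{\circ}}$ whose image lies in one component, say $F_1$, and which is an isomorphism onto its image. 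Since $\cF$ and $F_1$ are both smooth $\PP^1$-bundles over $C$ and agree over the dense open $C^{\circ}$, this isomorphism extends uniquely to a $C$-isomorphism $\cF \xrightarrow{\sim} F_1$. The same reasoning with $\iota^\ast \cF$, whose fiber at $c$ is $\cF|_{\iota(c)}$, identifies the remaining component as $F_2 \cong \iota^\ast \cF$.

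Finally, for the statement about the universal family, the incidence variety $\cW \subset \cF \times_C Y$ projects to $Y$ by the second factor, and the fiber of this projection over any point $y \in Y_c$ is the set of lines in the chosen ruling through $y$, which is a single reduced point because on $\PP^1 \times \PP^1$ each point lies on exactly one line of each ruling. Hence $\cW \to Y$ is a bijective morphism between smooth irreducible varieties, and so by Zariski's main theorem it is an isomorphism; the same argument applies to $\iota^\ast \cW$.

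The main obstacle, and essentially the only genuinely subtle step, is the extension across the ramification locus of $g$. Over such a point $c$ with $\iota(c) = c$, the fiber $X_{g(c)}$ is a quadric cone carrying only one ruling, while the smoothening $Y_c$ has two rulings that collapse to this single one under the contraction $\widetilde Y \to X \times_B C$. One therefore has to verify that the two components $F_1$ and $F_2$ really land in $\cF$ and $\iota^\ast \cF$ respectively (rather than becoming tangent or being swapped), which is done by tracing through the modification $\widetilde Y \to Y$: the exceptional $\PP^1 \times \PP^1$ at each node in $X \times_B C$ contributes precisely two rulings on $Y_c$, one of which survives in the small resolution $\cU$ and thus identifies with $\cF$, the other descending to $\iota^\ast \cF$.
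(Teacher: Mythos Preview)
The paper does not prove this lemma at all: it is stated with the citation \cite{HVV11} and no argument is given. So there is no ``paper's own proof'' to compare against; the authors simply import the result.

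Your proposed proof is along the right lines and would succeed. The fiberwise analysis, the use of trivial monodromy to split $F(Y/C)$ into two global components, and the Zariski-main-theorem argument for $\cW\cong Y$ are all sound. A couple of minor comments: for the identification $F_1\cong\cF$, rather than constructing an isomorphism over the dense open $C^\circ$ and then arguing it extends (which requires a separate justification --- two $\PP^1$-bundles agreeing on a dense open need not be isomorphic unless you control the extension), it is cleaner to note that $\cF$ is by definition the Stein factorization component of the relative Fano scheme of $X/B$, and that the base change and modification in diagram~\eqref{birational} induce a canonical $C$-morphism from one component of $F(Y/C)$ to $\cF$ which is fiberwise an isomorphism of $\PP^1$'s, hence an isomorphism. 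Your final paragraph correctly isolates the only delicate point (the ramification fibers), and your sketch of how the small resolution $\cU$ versus the blow-up/blow-down $\widetilde Y$ separate the two rulings there is the right idea; making it fully rigorous amounts to a local computation around a node of $X\times_B C$, which is routine.
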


With the notation as in $\S$3.1, we let $R_\delta:=\cW|_{\delta(C)}$ be the restriction of $\cW$ to the curve $\delta(C)\subseteq \cF$. Then $R_\delta$ is a family of one of the two rulings on the fibers of $Y$. Note that the ruled surface $R_{\delta}$ is actually isomorphic to $\cF$ in this case. We thus obtain birational maps from \eqref{birational} as follows:
\begin{equation}\label{birationalofsurface}
\begin{tikzcd}
\cW\cong Y\arrow[dashed]{r}
  & \cU\arrow{r} & X\\
 \cF\cong R_\delta \arrow[hookrightarrow]{u} \arrow[dashed]{r}{\Lambda_\delta} 
  & F_\delta \arrow[hookrightarrow]{u} &
\end{tikzcd}
\end{equation}

Geometrically, for $z\in \cF$, if we denote by $\ell(z)$ the corresponding lines on $X$,  the birational map $\Lambda_\delta:\cF\dashrightarrow \cF_\delta$ is defined away from the fibers over points in $\Delta$ and is given by 
\begin{equation}
\Lambda_\delta(z)= \ell(\iota(z))\cap \ell(\delta\circ r(z)) \in \ell(\delta\circ r(z)),
\end{equation}
for all $z\in \cF\backslash r^{-1}(\Delta)$.

\begin{remark}\label{jacobiansmfibration}
Similarly as Lemma \ref{isomorphism1}, we have an isomorphism 
\begin{equation}\psi'_\delta: J(C)\xrightarrow{\sim} J(Y)
\end{equation}
from the cylinder homomorphism $\Psi_\delta': H_1(C,\ZZ)\rightarrow H_3(Y,\ZZ).$
\end{remark}

\subsection{Weil restriction} A beautiful geometry fact  is that the quadric fibration $X\rightarrow B$ can be reconstructed from the ruled surface $\cF\rightarrow C$ by Weil restriction. More precisely,  one can define 
a contravariant functor 
\begin{equation}\begin{aligned}
\fR_{C/B} \cF: (\cS ch_{B})^\circ &\longrightarrow~~~ (\rm{Sets})\\
T&~\longrightarrow {\rm Hom}_C(T\times_{B} C, \cF)
\end{aligned}\end{equation}  It known (cf.~\cite{BLR90}) that  the functor $\fR_{C/B}\cF$ is representable by a scheme $\rm{Res}_{C/B} \cF$ over $B$  and there is a functorial isomorphism
\begin{equation}
\label{weilid}\rm{Hom}_{B}(V, \rm{Res}_{C/B} \cF)\xrightarrow{\sim} \rm{Hom}_C(V\times_{B} C, \cF),
\end{equation}
of functors in $V$,  where $V$ varies over all $B$-schemes.
 
As shown in \cite{HT12}, Hasset and Tschinkel indicate the following diagram:

\begin{equation}\label{weilbirational}
\xymatrix{ &\widehat{X}\ar[dl]_{\beta} \ar[dr]^{\gamma}& \\ \rm{Res}_{C/B}\cF \ar[dr]^{\varpi} & &\hspace{0.8cm}X\hspace{0.8cm}\ar[dl]_{\pi}\\   & B&},
\end{equation}
where the arrows are described as follows:
\begin{enumerate}
\item $\varpi^{-1}(b)=\rm{Sym}^2 (r^{-1}(b))$ is isomorphic to $\PP^2$ over $b\in \Sigma$; 
\item $\beta$ is the blowing up of $ \rm{Res}_{C/B}\cF$ along  the diagonal in $\varpi^{-1}(b)$ over each point $b\in \Sigma$; 
\item $\gamma$ is the blowing  down of $\widehat{X}$ along  the proper transform of $\varpi^{-1}(b)$ in $\widehat{X}$ over each point $b\in \Sigma$.
\end{enumerate}

In particular, if $X\rightarrow B$ is a smooth quadric bundle, the Weil restriction  $\rm{Res}_{C/B}\cF$ is isomorphic to $X$.

\section{Sections on Quadric fibrations}
With the same assumption as in $\S$3, we are going to find families of curves on $X$ satisfying the conditions  in Theorem \ref{Voisincriterion}. The natural candidates are  families of sections and  bisections of $X\rightarrow B$.  In this section, we prove Theorem \ref{thm:0} and show that there exists  infinitely many families of sections and bisections satisfying $(\ast)$. Throughout this section, we assume that the family $X\rightarrow B$ has non-trivial mondromy. 

\subsection{Notation and Conventions}
We denote by $\cS ch_{S}$ or $\cS ch_{\CC}$ the category of schemes over a scheme $S$ or complex numbers $\CC$.

Let $\pi:X\rightarrow B$ be a projective family of varieties over a curve $B$.  The moduli functor  
$$\fS(X/B): (\cS ch_\CC)^{\circ} \rightarrow (\Set)$$ sends any $T\in\Sch_\CC $ to the set of families of sections of $X\rightarrow B$ over $T$. By \cite{Gr62} Part IV4.c,  $\fS(X/B)$ is representable by a scheme $\se(X/B)$, which is a union of countably many quasi-projective varieties.  
 
In this section,  for a cycle $z\in CH_1(X)$, we will use $[z]$ to denote the image of the cycle class map $c_X(z)$. Moreover,  if $\sigma:B\rightarrow X$ is a section of $\pi$, we continue to use $\sigma$ to represent the cycle class of  $\sigma(B)$ on $X$.
 
\subsection{Families of sections} 
Let us consider the space of sections on $X$.  For any non-negative integer $h \in \ZZ^{\geq 0}$, we define $$\se(X/B, h):=\{\sigma: B\rightarrow X|~ \sigma_\ast[B]=(1,h)\in H^4(X,\ZZ)\}$$ to be the space of smooth sections on $X\rightarrow B$ of class $(1,h)$.  Then $\se(X/B)$ is the union of all $\se(X/B,h)$ for  $h\in \ZZ^{\geq 0}$.

\begin{remark}
The space $\se(X/B, h)$  is  equivalent  to the space defined in \cite{HT12} using the {\it height} of sections. 
Here, the  height of a section $\sigma\in \se(X/B, h) $ is defined as $\deg(\sigma^\ast \omega_\pi^{-1})$, which only depends on $h$. 
\end{remark} 

The space $\se(X/B,h)$ is quasi-projective with natural compactifications. In this paper,  we regard the $\se(X/B, h)$ as an open subset of the Hilbert scheme of $X$ and denote by
  $\sse(X/B,h)$ the closure of $\se(X/B, h)$ in the Hilbert scheme parameterizing sections in $\cF$. 
  
Recall that we have a morphism induced by the Deligne-cycle class map
\begin{equation}
\phi_h:\se(X/B, h)\rightarrow J(X)_h
\end{equation}
with $\phi_h(\sigma)=[\sigma]$,  which natural extends to $\sse(X/B,h)$. Our first main result  is: 

\begin{theorem}\label{RCsec}
For $h>>0$, the morphism $\phi_h: \se(X/B,h)\rightarrow J(X)_h$  is the composition of an open subset and a projective bundle morphism. In particular,  the extended map $\bar{\phi}_h: \sse(X/B,h)\rightarrow J(X)_h$  is surjective with rationally connected general fibers.
\end{theorem}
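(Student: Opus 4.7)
The strategy is to exploit the Weil restriction description in diagram \eqref{weilbirational}, which identifies $X$ birationally with $\mathrm{Res}_{C/B}\cF$ and puts sections of $X\to B$ in essentially bijective correspondence with sections of the $\PP^1$-bundle $\cF \to C$. Concretely, the universal property \eqref{weilid} applied to $V=B$ shows that a section $B \to \mathrm{Res}_{C/B}\cF$ is the same datum as a $C$-morphism $\tilde\sigma: C \to \cF$. The birational modifications in \eqref{weilbirational} only affect the fibers over $\Delta$, which are replaced by $\PP^2=\mathrm{Sym}^2(\PP^1)$; sections of sufficiently high height avoid the nodes of the singular fibers of $X$, so for such sections the correspondence $\sigma\leftrightarrow \tilde\sigma$ is a morphism of moduli. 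Thus for $h\gg 0$ we obtain an open immersion of $\se(X/B,h)$ into a component of the Hom-scheme $\mathrm{Hom}_C(C,\cF)$ of an appropriate relative degree $d=d(h)$.

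Writing $\cF=\PP(E)$ for a rank-two vector bundle $E$ on $C$, the $C$-morphisms $\tilde\sigma: C \to \PP(E)$ of relative degree $d$ are classified by the component $\cQ_d$ of a Quot scheme parametrising rank-one locally free quotients $E\twoheadrightarrow L$ with $\deg L = d$. The forgetful morphism
\[
\rho_d: \cQ_d \longrightarrow \Pic^{d}(C),\qquad [E\twoheadrightarrow L]\longmapsto [L],
\]
is a Zariski-locally trivial projective bundle when $d$ is sufficiently large: by Serre vanishing $H^1(C,L\otimes E^\vee)=0$ for $\deg L\gg 0$, so $h^0(L\otimes E^\vee)$ is constant, and cohomology-and-base-change promotes the set-theoretic identification $\rho_d^{-1}([L]) = \PP\bigl(H^0(L\otimes E^\vee)\bigr)$ to a genuine projective bundle structure. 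In particular the fibers of $\rho_d$ are projective spaces, hence rational.

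It remains to identify $\phi_h$ with $\rho_d$ under the isomorphism $J(X)_h \xrightarrow{\sim} J(C)_h \xrightarrow{\sim} \Pic^d(C)$ provided by Lemma \ref{isomorphism1} (after an Abel--Jacobi translation of a reference point). Fixing a base section $\sigma_0\in \se(X/B,h)$ with Weil counterpart corresponding to $E\twoheadrightarrow L_0$, the cylinder homomorphism \eqref{eq3.2} expresses $[\sigma]-[\sigma_0]$ in $J(X)$ as the Abel--Jacobi image of a divisor on $C$ whose class in $\Pic^0(C)$ is $[L]-[L_0]$; this is precisely the content of Remark \ref{AJmap} together with the Hassett--Tschinkel comparison mentioned in Section~1. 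The main technical step I expect is to trace the cylinder construction through the birational modifications of \eqref{birationalofsurface} and verify this equality of cycle classes. Once this matching is in place, the factorisation
\[
\phi_h : \se(X/B,h) \hookrightarrow \cQ_d \xrightarrow{\;\rho_d\;} \Pic^d(C) \cong J(X)_h
\]
gives the desired projective bundle description. Passing to the closure $\sse(X/B,h)$, the general fiber is a compactification of a Zariski open subset of a projective space and is in particular rationally connected, and $\bar\phi_h$ is surjective because $\rho_d$ is.
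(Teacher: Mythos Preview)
Your outline follows essentially the same route as the paper: reduce to sections of the $\PP^1$-bundle $\cF\to C$ via Weil restriction, use the projective-bundle structure of the section space over $\Pic^d(C)$ (the paper quotes Hassett--Tschinkel for this, while you reprove it via the Quot scheme), and then match the Deligne--cycle map $\phi_h$ with the determinant map under the cylinder isomorphism $J(X)\cong J(C)$.

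Two remarks. First, your justification that the correspondence $\sigma\leftrightarrow\tilde\sigma$ is a morphism is inaccurate: it is not true that sections of large height automatically miss the cone vertices. The paper handles this differently (Lemma~\ref{isomorphism2}): it constructs $\theta$ as a natural transformation of moduli functors, observes it is separated and bijective, and concludes it is an isomorphism by Zariski's main theorem, using smoothness of $\se(\cF/C)$. This gives an isomorphism rather than just an open immersion, which is what is needed for the closure statement.

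Second, the compatibility you flag as ``the main technical step I expect'' is indeed the heart of the proof, and the paper's argument for it (Lemma~\ref{commutativity}) is more delicate than your sketch suggests. The paper does not match your $\rho_d$ directly, but rather the Hassett--Tschinkel map $\widehat\phi_d(\delta)=\delta^\ast[\delta_0]-D$; it then computes $2(\phi_h\circ\theta-\psi_{\delta_0}\circ\widehat\phi_d)=0$ by an explicit manipulation of rational equivalences among several auxiliary sections $\bar\delta,\bar\delta',\bar\delta_0,\bar\delta_0'$ in $CH^2(\cU)$, and finally evaluates at $\delta_0$ to kill the possible $2$-torsion discrepancy. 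You should not expect this identification to fall out formally from the cylinder map alone.
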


The proof relies on the standard argument of  the ``reduction to the discriminant argument" (cf.~\cite{HT12} $\S$3).  We now review this reduction and  divide the proof into two steps: 
\\
 
\noindent {\bf Step 1}. 
Denote by $\se(\cF/C)$ the space of sections on the ruled surface $\cF\rightarrow C$,  then there is a  natural one-to-one map
\begin{equation}\label{seciso}\theta: \se(X/B)\rightarrow\se(\cF/C),
\end{equation}
since there are unique two lines passing through a given point on a quadric surface.
 
\begin{lemma}\label{isomorphism2}
The map (\ref{eq3.2}) is an isomorphism.
\end{lemma}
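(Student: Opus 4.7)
The plan is to produce an explicit inverse morphism $\theta^{-1}\colon \se(\cF/C)\to \se(X/B)$ and verify the two are mutually inverse, working functorially in families so that the result is automatically a scheme isomorphism. Given a section $\delta\colon C\to \cF$, I would define $\sigma_\delta\colon B\to X$ as follows: over $B\setminus\Delta$, the cover $g$ is \'etale, and for $b\in B\setminus\Delta$ with $g^{-1}(b)=\{c,\iota(c)\}$, the two lines $\delta(c)$ and $\delta(\iota(c))\subset X_b\cong \PP^1\times \PP^1$ lie in opposite rulings and meet in exactly one point, which I take to be $\sigma_\delta(b)$. This yields a morphism $B\setminus\Delta\to X$, which extends uniquely to a section $\sigma_\delta\colon B\to X$ by properness of $X\to B$ and smoothness of the curve $B$. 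In the other direction, $\theta(\sigma)$ sends each $c\in C$ lying over $b\in B\setminus\Delta$ to the unique line in $X_b$ from the ruling indexed by $c$ passing through $\sigma(b)$, and extends over the ramification locus by properness of $\cF\to C$.

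Over $B\setminus \Delta$ the verification that these are mutually inverse is immediate from the classical incidence geometry of lines on a smooth quadric: through each point of $\PP^1\times \PP^1$ passes exactly one line in each ruling, and two lines from opposite rulings meet in exactly one point. The nontrivial bookkeeping is concentrated over $\Delta$, where $X_b$ degenerates to a quadric cone whose two rulings collapse to a single family through the vertex. To handle this cleanly, I would exploit the Weil restriction identity~\eqref{weilid}, which furnishes a canonical isomorphism $\se(\cF/C)\cong \se(\rm{Res}_{C/B}\cF/B)$ of moduli functors, and then use the blow-up/blow-down diagram~\eqref{weilbirational} to transfer this to $\se(X/B)$. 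Since the modifications in~\eqref{weilbirational} are supported over the finitely many points of $\Delta$ and are centered in a single $\PP^2$-fiber at each such point, a section of $X$ (resp. of $\rm{Res}_{C/B}\cF$) meets the relevant exceptional locus in at most a single point and hence lifts uniquely through $\widehat{X}$.

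The hardest part of the plan is the last step: verifying not only that sections biject set-theoretically through the birational diagram, but that this bijection upgrades to an isomorphism of moduli functors. This requires tracking the scheme structure of families of sections transverse to the singular fibers of $X\to B$ and checking that no infinitesimal obstruction arises from the modification over $\Delta$. A useful technical input should be that $X$ is smooth along any section (so sections form a smooth open subscheme of the Hilbert scheme), which allows a direct comparison of tangent spaces on both sides at each ramification point, and that a section through a quadric cone meets it at a smooth point of the cone for sections in the classes of interest, avoiding the vertex where the fiber of $\rm{Res}_{C/B}\cF\to B$ differs from $X_b$.
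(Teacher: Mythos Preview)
Your plan is sound and uses the same geometric inputs as the paper (the Weil restriction identity~\eqref{weilid} and the blow-up/blow-down diagram~\eqref{weilbirational}), but you take a more laborious route to the conclusion. You set out to build an explicit inverse $\theta^{-1}$ and then verify both compositions are the identity at the level of moduli functors, which, as you yourself flag, forces you into an infinitesimal analysis over $\Delta$ (tangent-space comparison, transversality of sections to the exceptional loci, the caveat about sections possibly hitting the vertex of a cone).

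The paper bypasses all of this with Zariski's Main Theorem. It only constructs $\theta$ in one direction, as a composition $\theta'\circ\rho$: the map $\rho\colon \se(X/B)\to \se(\rm{Res}_{C/B}\cF/B)$ is induced by $\beta_\ast\gamma^\ast$ on families of sections (well defined because the modifications in~\eqref{weilbirational} are fiberwise over finitely many points of $B$), and $\theta'\colon \se(\rm{Res}_{C/B}\cF/B)\to \se(\cF/C)$ comes directly from the natural transformation furnished by~\eqref{weilid}. This exhibits $\theta$ as a genuine morphism of schemes. Since $\theta$ is separated and bijective (the one-to-one statement recorded just before the lemma), and since the target $\se(\cF/C)$ is smooth (sections of a $\PP^1$-bundle over a smooth curve are unobstructed), Zariski's Main Theorem over $\CC$ forces $\theta$ to be an isomorphism. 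No inverse is ever written down, and none of the delicate fiberwise geometry over $\Delta$ that you anticipate needs to be checked. Your approach would go through, but the paper's is shorter and makes your final paragraph of worries disappear.
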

\begin{proof} Given a section $\sigma:B\rightarrow X$, since the smooth fibers of $X\rightarrow B$ and $\rm{Res}_{C/B} \cF\rightarrow B $ are isomorphic and the ambient spaces are smooth,  the pullback $\gamma^\ast$ and composition $\beta_\ast$ of $\sigma$ remains a section $\beta_\ast\gamma^\ast(\sigma):B\rightarrow  \rm{Res}_{C/B}\cF$ of $\varpi$. By representability of moduli functors,  this actually gives a morphism 
\begin{equation}\label{firstmap}
\begin{aligned}\rho:\se(X/B) &\longrightarrow\se(\rm{Res}_{C/B}\cF/C)\\
\sigma&\longmapsto \beta_\ast\gamma^\ast(\sigma).
\end{aligned}
\end{equation}
induced from the natural transformation
$\fS(X/B)\Longrightarrow \fS(\rm{Res}_{C/B} \cF/B)$ using the base change $\gamma^\ast$ and composition $\beta_\ast$.
 
Similarly,  we construct a natural transformation between two moduli functors $ \fS(\rm{Res}_{C/B} \cF/B)$ and $ \fS(\cF/C)$ using the universal property of  Weil restriction.  
Namely, for a family of sections $$ g:\Sigma\rightarrow S\times \rm{Res}_{C/B} \cF,$$ over a scheme $S$,  we get a unique map 
\begin{equation}\label{wrf}\fR(g): \Sigma\times_{B} C\rightarrow S\times \cF
\end{equation} by the canonical isomorphism \eqref{weilid}. Moreover,  it is easy to see that \eqref{wrf} is a family sections of $\cF\rightarrow C$ over $S$. This construction defines  a natural transformation 
\begin{equation}\label{naturaltransformation}\vartheta:\fS(\rm{Res}_{C/B} \cF/B) \longrightarrow \fS(\cF/C). 
\end{equation}
Once again,  we obtain a morphism  $$\theta':\se(\rm{Res}_{C/B}\cF/B)\rightarrow \se(\cF/C),$$ from \eqref{naturaltransformation} 
because of the representability of  two functors  $\fS(\rm{Res}_{C/B}\cF/B)$ and $\fS(\cF/C)$.

Then the map $\theta=\theta'\circ\rho$ is the composition of $\theta'$ and $\rho$, and hence  is a morphism. Moreover, it is separated and bijective. As we work over the filed $\CC$ and $\se(\cF/C)$ is smooth,  then it is an isomorphism by Zariski's main theorem for quasi-finite morphisms. 
\end{proof}
\begin{remark}
One can easily see the result above holds for  threefolds with quadric fibration over any curve $B$.
\end{remark}

Consider a section $\delta:C\rightarrow \cF$ as a curve on $\cF$, then we define $$\se(\cF/C, d)=\{\delta\in \se(\cF/C)|~\delta^2=d\},$$
which is an irreducible component of $\se(\cF/C)$. We have $$ \se(X/B,h)\xrightarrow{\sim} \se(\cF/C,d)$$  via the isomorphism \eqref{eq3.2} for  $d$  satisfying   \begin{equation} d=(1,h)\cdot c_1(X) -2+\frac{m}{2}-2g(B).
\end{equation}

\noindent{\bf Step 2}.  Before we proceed,  for simplicity of notations, we use Remark \ref{AJmap} to  modify our map $\phi_h$ as follows:
\begin{equation}
\begin{aligned}
\phi_h:\se(X/B,h)&\rightarrow J(X)\\
    \sigma&\mapsto  [\sigma]-[\sigma_0]
\end{aligned}
\end{equation}
where we fix a section $\sigma_0\in \se(X/B,h) $. Then it is equivalent to show that our first assertion holds for this refined map.  

Furthermore,  for any $\delta_0\in \se(\cF/C)$, Hassett and Tschinkel  \cite{HT12} have defined a map 
\begin{equation}\label{eq3.4}
\begin{aligned}
\widehat{\phi}_d: \se(\cF/C, &~d)\rightarrow J(C)\\
~~\delta~~~~~&~~~~\longmapsto\delta^\ast ([\delta_0])- D,
\end{aligned}
\end{equation}
where $D\in \Pic(C)$ is a divisor  and $\deg(D)=\delta^\ast([\delta_0])$. They have shown that this map $\widehat{\phi}_d$ is the composition of an open immersion and a projective bundle map for $h$ sufficiently large. 

\begin{remark}
We also recommend readers to \cite{Zh12} for a more general construction for homogeneous fibrations.
\end{remark}
 
The map $\widehat{\phi}_d$ is not canonical and depends on the choice of $\delta_0$ and $D$. However, if we take $\delta_0\in \se(\cF/C, d)$ and $D=\delta_0^\ast ([\delta_0])$, we claim that \eqref{eq3.4} is the same as the morphism $\phi_h$ up to an isomorphism  $J(X)\cong J(C)$, that is,
\begin{lemma}\label{commutativity}
The diagram 
\begin{equation}\label{comdiag}
\xymatrix{ \se(\cF/C,d)  \ar[d]^{\theta}\ar[r]^{~~~~~~~~~~~~\widehat{\phi}_{d}} & J(C)\ar[d]^{\psi_{\delta_0}}\\ \se(X/B,h)\ar[r]^{~~~~~~\phi_h} & J(X) }  
\end{equation}
is commutative, where the isomorphisms $\theta$ and $\psi_{\delta_0}$ are given in Lemma \ref{isomorphism1} and Lemma \ref{isomorphism2}.
\end{lemma}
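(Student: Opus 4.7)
My plan is to rewrite both compositions $\phi_h\circ\theta$ and $\psi_{\delta_0}\circ\widehat{\phi}_d$ as Abel--Jacobi classes of explicit $1$-cycles in $X$, and then produce a rational equivalence relating the two resulting cycles.

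I first translate the right-hand side into cycle language. By Lemma~\ref{isomorphism1} the cylinder isomorphism $\psi_{\delta_0}$ is induced by the correspondence $F_{\delta_0}\subset C\times X$ with the projections $q_{\delta_0},p_{\delta_0}$, and functoriality of the Deligne cycle class map under this correspondence yields
\[
\psi_{\delta_0}\bigl(\widehat{\phi}_d(\delta)\bigr)
\;=\; c_X\bigl(p_{\delta_0,*}\,q_{\delta_0}^{\,*}(\delta^{*}[\delta_0]-\delta_0^{*}[\delta_0])\bigr)
\;=\; c_X\Bigl(\sum_{c\in \delta\cap\delta_0}\ell(\delta_0(c))-\sum_{c\in \delta_0\cdot\delta_0}\ell(\delta_0(c))\Bigr),
\]
where $\ell(\delta_0(c))\subset X$ denotes the line parametrised by $\delta_0(c)\in\cF$. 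Since $\phi_h(\theta(\delta))=c_X(\sigma-\sigma_0)$ by construction, with $\sigma=\theta(\delta)$ and $\sigma_0=\theta(\delta_0)$, the commutativity of the diagram reduces to the cycle identity
\[
\sigma-\sigma_0 \;\equiv\; \sum_{c\in\delta\cap\delta_0}\ell(\delta_0(c))-\sum_{c\in\delta_0\cdot\delta_0}\ell(\delta_0(c)) \pmod{\ker AJ_X}
\]
in $CH^2(X)$.

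To exhibit this as a rational equivalence I work on the ruled divisor $D_{\delta_0}:=p_{\delta_0}(F_{\delta_0})\subset X$. It has the lines $\ell(\delta_0(c))$ as rulings and $\sigma_0(B)$ as its double curve, since $p_{\delta_0}:F_{\delta_0}\to D_{\delta_0}$ is the normalisation of the ruled surface and the canonical lift $\tau_{\delta_0,\sigma_0}:C\to F_{\delta_0}$ of $\sigma_0$ is a section of the $\PP^1$-bundle $q_{\delta_0}$ which pushes forward to $2\sigma_0$. On the $\PP^1$-bundle $F_{\delta_0}\to C$ any two sections of the same numerical class differ in $\Pic(F_{\delta_0})$ by the pullback of a divisor on $C$; applied to $\tau_{\delta_0,\sigma_0}$ and to a lift of $\sigma$ obtained by pulling $\sigma$ back to the common birational model $\widetilde Y$ of diagram~\eqref{birational} and descending, this yields a rational equivalence on $F_{\delta_0}$ whose image under $p_{\delta_0}$, divided by the degree-two factor coming from $g:C\to B$, realises the desired identity in $CH^2(X)$.

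The principal difficulty is that $\sigma$ does not admit a canonical lift to $F_{\delta_0}$ itself (only to the different ruled surface $F_\delta$), so the comparison must be carried out on the common birational model $\widetilde Y$ of diagram~\eqref{birational}, keeping track of how the small resolution $\mu':\widetilde Y\to\cU$ and the blow-up $\mu:\widetilde Y\to Y$ interact with the Weil-restriction picture of Section~3.3. An alternative route that bypasses the explicit rational equivalence is to verify the equality infinitesimally at $\delta_0$ (where both compositions vanish) using Griffiths' infinitesimal Abel--Jacobi formula together with the compatibility of the cylinder map with the Hodge structures, and then to deduce the global equality on the irreducible variety $\se(\cF/C,d)$ by rigidity of morphisms into the abelian variety $J(X)$.
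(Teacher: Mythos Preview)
Your translation of both sides into Abel--Jacobi classes of explicit $1$-cycles is correct and is exactly how the paper begins. The gap is in the middle step.

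Your plan is to lift $\sigma$ to a section of the ruled surface $F_{\delta_0}$ and then compare it there with the lift of $\sigma_0$. But $\sigma$ has no natural lift to $F_{\delta_0}$: its canonical lift $\bar\delta:=p^{-1}(\sigma(B))$ lives in $F_\delta$, not in $F_{\delta_0}$, and passing through $\widetilde Y$ does not produce one either. The paper does not attempt this. Instead it works inside the full universal family $\cU$ and introduces \emph{auxiliary} sections $\bar\delta'\subset F_\delta$ and $\bar\delta_0'\subset F_{\delta_0}$ characterised by $p(\bar\delta')=p(\bar\delta_0')$ (geometrically: the second intersection point of the two rulings through $\sigma(b)$ and $\sigma_0(b)$). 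Using the standard relation between two sections of a ruled surface on each of $F_\delta$ and $F_{\delta_0}$ separately, together with the bridge $p_*\bar\delta'=p_*\bar\delta_0'$, one obtains the rational equivalences in $CH^2(\cU)$ that give
\[
2\bigl(\phi_h\circ\theta\bigr)(\delta)\;=\;2\bigl(\psi_{\delta_0}\circ\widehat\phi_d\bigr)(\delta).
\]
Note that only the doubled identity comes out: your phrase ``divided by the degree-two factor'' hides a genuine $2$-torsion ambiguity in $J(X)$ that cannot be removed by the cycle computation alone. The paper kills this $2$-torsion by observing that both compositions vanish at $\delta=\delta_0$, so the constant $2$-torsion difference is zero. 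You should make both of these points explicit.

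Your alternative route via the infinitesimal Abel--Jacobi map and ``rigidity'' is not sufficient as stated. Agreement of two morphisms $\se(\cF/C,d)\to J(X)$ at a single point together with equality of their differentials at that point does not force global equality: the source has Albanese $J(C)$, and a morphism to an abelian variety is determined by its value at a point \emph{and} the induced homomorphism on the Albanese, not by first-order data at one point. To make this work you would need to identify the Albanese map of $\se(\cF/C,d)$ (using the Hassett--Tschinkel projective-bundle description) and check that both compositions induce the same homomorphism $J(C)\to J(X)$; this is essentially the content of Lemma~\ref{isomorphism1} again, so the argument becomes circular unless carried out carefully.
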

\begin{proof}

To prove the assertion, we first show that the diagram \eqref{comdiag} is commutative up to the translation of a two torsion element, i.e. there exist a two torsion element $x\in J(X)$ such that 
$$\widehat{\phi}_d\circ\psi_{\delta_0} (\delta)=\phi_h\circ\theta(\delta) +x, $$ for all $\delta\in \se(\cF/C,h)$. 

Using the identifications of the fibers of $F_{\delta_0}\rightarrow C$ and the fibers of  $\cU\rightarrow \cF$ via the embedding $F_{\delta_0}\hookrightarrow \cU$,  we have   
$$\psi_{\delta_0}\circ\widehat{\phi}_d(\delta)=[p_\ast ( q^\ast (\delta\cdot\delta_0- \delta_0^2))],$$ where $q^\ast (\delta\cdot\delta_0- \delta_0^2)) $ is  a union of lines in the fibers of $X$.

On the other hand,  set $\sigma=\theta(\delta)$ and $\sigma_0=\theta(\delta_0)$,  and then we have  
 $$\phi_h\circ\theta(\delta)=[\sigma]-[\sigma_0],$$
 by definition. Recall that we have a commutative diagram \eqref{birational}. And we take  $\bar{\delta}:=p^{-1}(\sigma(B))\subset \cU$
 to be the inverse image of  $\sigma(B)$. Then it is easy to see that $\bar{\delta}$ is also contained in $F_{\delta}$ and can be viewed as a section of the ruled surface $ F_{\delta}$ over $\delta(C)\cong C$.
 
Observe that there exist two sections $\bar{\delta}'\subseteq F_{\delta}$ and $\bar{\delta}_0'\subseteq F_{\delta_0}$  such that  
 \begin{equation}p(\bar{\delta}_0')=p(\bar{\delta}').
 \end{equation} 
 This is because for general $b\in B$,  two lines passing though the point $\delta(b)$  will meet the two lines passing through the point $\delta_0(b)$ at exactly two points.
 \begin{remark}\label{relationofsections}
We can also view $\bar{\delta}'_0$  (resp.~$\bar{\delta}'$) as the image of $\delta$ (resp.~$\delta_0$) via the birational map \eqref{birational}. Then  $\bar{\delta}'_0$ is rationally equivalent to $\bar{\delta}$ if and only if $\delta$ is rationally equivalent to $\delta_0$.
 \end{remark}

Furthermore,  we have the following relations among these sections in $CH^2(\cU)$.

\begin{enumerate}[(1)]
\item $\bar{\delta}_0\equiv_{rat} \bar{\delta}'_0+ (\bar{\delta}'_0 \cdot \bar{\delta}_0 -\bar{\delta}^2_0)f_0$ and $\bar{\delta}\equiv_{rat} \bar{\delta}'+(\bar{\delta}' \cdot \bar{\delta} -\bar{\delta}^2)f$ in the corresponding ruled surface $F_{\delta_0}$ (resp.~$F_{\delta}$) and hence in $\cU$.

Here, $(\bar{\delta}'_0 \cdot \bar{\delta}_0 -\bar{\delta}^2_0)f_0$ (resp.~$(\bar{\delta}' \cdot \bar{\delta} -\bar{\delta}^2)f$) denote the corresponding fiber classes in $F_{\delta_0}$ (resp. $F_\delta$) containing  $(\bar{\delta}'_0 \cdot \bar{\delta}_0 -\bar{\delta}^2_0)$ (resp.~$(\bar{\delta}' \cdot \bar{\delta} -\bar{\delta}^2)$). 
\\
   
\item $ (\bar{\delta}'_0\cdot \bar{\delta}_0)f_0=q^\ast ( \delta\cdot\delta_0)=(\bar{\delta}'\cdot\bar{\delta})f$ in $\cU$; 
\\

\item  $(\bar{\delta}^2)f_0 \equiv_{rat} q^\ast (\delta^2)$  and $(\bar{\delta}_0^2)f \equiv_{rat} q^\ast p_\ast(\delta^2_0)$.
\end{enumerate}
Here, $(1)$ and $(2)$ are straightforward, and $(3)$ comes from $(2)$ and Remark \ref{relationofsections}.
Putting these together, we get
\begin{equation}
\begin{aligned}
2\phi_h\circ\theta(\delta)&=[p_\ast (\bar{\delta}-\bar{\delta_0})]\\
&=[p_\ast(\bar{\delta}-\bar{\delta}')]+[p_\ast(\bar{\delta}'_0-\bar{\delta}_0)]\\
 &=[p_\ast(\bar{\delta}^2f-\bar{\delta}_0^2f_0)]\\  
 &=[p_\ast q^\ast(\delta^2-\delta_0^2)]  \\
&= 2p_\ast q^\ast (\delta\delta_0-\delta_0^2)=2\psi_{\delta_0}\circ\widehat{\phi}_d(\delta).
\end{aligned}
\end{equation}
Therefore,  $2(\phi_h\circ\theta-\psi_{\delta_0}\circ\widehat{\phi}_d)=0$ which implies that the diagram is commutative up to a 2-torsion element. 

Moreover, for $h>>0$,  since the pullback and pushforward preserves the rational equivalence relation, we have  $$\phi_h\circ\theta(\delta_0)=0=\psi_{\delta_0}\circ\widehat{\phi}_d(\delta_0).$$
It follows that only $\phi_h\circ\theta=\psi_{\delta_0}\circ\widehat{\phi}_d$ happens.
\end{proof}

\begin{remark}{(Families with trivial monodromy)}
If $X\rightarrow B$ is a smooth quadric bundle with trivial monodromy, then $H^4(X,\ZZ)\cong \ZZ^3$ and the intermediate Jacobian $J(X)$ is isomorphic to the Jacobian $J(B)$.  

Similarly, for a class  $\epsilon=(1,a,b)\in H^4(X,\ZZ)$, we can get the natural map $$\sse(X/B, \epsilon)\rightarrow J(X)$$
is surjective and rationally connected for large $(a,b)$
\end{remark}

\subsection{Bisections on quadric fibrations} Next, we will consider  families of {\it special} bisections on $\pi:X\rightarrow B$. With the same notations as before,  let $\Sigma=Sing(\pi^{-1} (\Delta))$ be the set of points where the morphism $\pi$ is not smooth. We denote by $m$ the number of points in $\Sigma$.

Let us denote by $\bse(X/B, h)$ the space of bisections of class $(2,h)$ on $X$ which are ramified over all the points in $\Sigma$. Note that such a bisection $\sigma$ corresponds to  a genus $g=\frac{m}{2}+2g(B)-1$ and bidegree $(2,h)$ curve passing through all points in $\Sigma$.  One can view $\bse(X/B, h)$  as an open subset of the Hibert scheme  which parameterize curves on $X$ of genus $g$, bidegree  $(2, h)$ and passing through all the points in $\Sigma$. We denote by $\overline{\bse}(X/B,h)$ the Zariski closure in this Hilbert scheme.
\begin{remark}
Note that if $B\cong \PP^1$, then $m$ is always greater than zero with the assumption of non-trivial monodromy. 
\end{remark}
Recall that we have a smooth family of quadric surfaces $Y\rightarrow C$ by \eqref{birational}. According to Remark \ref{jacobiansmfibration}, we have 
$$\psi'_\delta:J(C)\xrightarrow{\sim} J(Y)$$ via the cylinder morphism $\Psi'_\delta$. Similarly as the non-smooth case, we also have $\se(Y/C)\cong \se(\cF/C)$.

Let $\zeta\in H^4(Y,\ZZ)$ be an integral class and  denote by $\se(Y/C, \zeta)$ the space of sections of class $\zeta$, which is a component of $\se(\cF/C)$. By the same argument in the proof of Theorem \ref{RCsec},  we get  that the morphism 
\begin{equation}\label{bisectionajmap}
\phi_\zeta: \se(Y/ C,\zeta)\rightarrow J(Y)_\zeta
\end{equation} 
induced by the cycle class map is the composition of an open immersion and a projective bundle map when $\zeta\cdot ([-K_Y])>>0$.

Since  $\se(Y/C,\zeta)\cong \bse(X/B,h)$ via the pushforward $\eta_\ast$ and pullback $\mu^\ast$,  where $h$ is uniquely (up to the involution) determined by $\zeta$,  then we get
\begin{theorem} \label{RCbsec}
The morphism $\phi_{(2,h)}: \bse(X/B,h)\rightarrow J(X)_{(2,h)}$ defined by Deligne-cycle class map is dominant and each fiber is rational. The extended map
 $$\overline{\phi}_{(2,h)}: \overline{\bse}(X/B,h)\rightarrow J(X)_{(2,h)}$$ is surjective with rationally connected general fibers.
\end{theorem}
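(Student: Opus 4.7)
The plan is to transfer the statement to the analogous result for sections of the smooth quadric surface family $Y\to C$, and then to replay the proof of Theorem~\ref{RCsec} in the smooth setting. The birational diagram \eqref{birational} provides exactly the bridge we need.

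First I would upgrade the set-theoretic correspondence $\bse(X/B,h)\cong \se(Y/C,\zeta)$ into an isomorphism of Hilbert schemes via $\eta_{*}\mu^{*}$. The crucial observation is that a bisection ramified over every point of $\Sigma$ base changes to a curve in $X\times_{B}C$ whose only singularities are ordinary double points over the preimages of $\Sigma$: this is because both the bisection and $g:C\to B$ are doubly ramified at the same points of $B$. Resolving these nodes inside $\widetilde{Y}$ and contracting through $\mu$ produces a smooth section of $Y\to C$; conversely, sections of $Y\to C$ pushforward to such bisections of $X$. Matching cohomology classes identifies $\zeta$ up to the involution swapping the two rulings, which is exactly what distinguishes a single component of $\bse(X/B,h)$, and Zariski's main theorem (applied as in Lemma~\ref{isomorphism2}) then promotes the bijection to an isomorphism.

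Second, since $Y\to C$ is smooth with trivial monodromy, an obvious variant of Lemma~\ref{isomorphism2} (the Weil restriction argument works verbatim for the trivial \'etale double cover) yields $\se(Y/C)\cong \se(\cF/C)\sqcup \se(\iota^{*}\cF/C)$. Applying the Hassett--Tschinkel fibration theorem \eqref{eq3.4} on the ruled surface $\cF\to C$, for $\zeta\cdot(-K_{Y})\gg 0$ the Abel--Jacobi morphism $\phi_{\zeta}:\se(Y/C,\zeta)\to J(Y)_{\zeta}$ is the composition of an open immersion and a projective bundle, hence dominant with rational fibers.

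The main obstacle will be the third step: verifying that under the identifications above the morphism $\phi_{\zeta}$ intertwines with $\phi_{(2,h)}$ via the isomorphism $J(Y)\cong J(C)\cong J(X)$ coming from the cylinder homomorphisms of Lemma~\ref{isomorphism1} and Remark~\ref{jacobiansmfibration}. The argument should parallel Lemma~\ref{commutativity}: expressing $\mu'_{*}\mu^{*}\bar\delta$ in $CH^{2}(\cU)$ as the bisection plus exceptional contributions of $\mu$ and $\mu'$, and then pushing down to $X$ via $p$, should show that the two Abel--Jacobi values agree modulo a universal $2$-torsion element that vanishes once a base point is fixed. Granting this commutativity, the assertions on $\phi_{(2,h)}$ are immediate, and the extension to $\overline{\bse}(X/B,h)$ follows because the Deligne cycle class map extends continuously to the Hilbert scheme closure and rationality of a Zariski-open subset of the general fiber forces its closure to be rationally connected.
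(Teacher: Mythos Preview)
Your overall strategy---reduce to sections of the smooth model $Y\to C$ via $\eta_*\mu^*$, invoke the Hassett--Tschinkel structure on $\phi_\zeta$, and then match the two Abel--Jacobi maps---is exactly what the paper does. The difference is in your third step. You flag the commutativity of
\[
\begin{tikzcd}
\se(Y/C,\zeta)\ar[r,"\phi_\zeta"]\ar[d,"\eta_*\mu^*"'] & J(Y)_\zeta\ar[d,"\eta_*\mu^*"]\\
\bse(X/B,h)\ar[r,"\phi_{(2,h)}"] & J(X)_{(2,h)}
\end{tikzcd}
\]
as the ``main obstacle'' and propose to rerun the intersection-theoretic computation of Lemma~\ref{commutativity}, with its $2$-torsion ambiguity. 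The paper observes that this is unnecessary: here \emph{both} vertical arrows are literally induced by the same correspondence $\eta_*\mu^*$, so the square commutes by functoriality of the Deligne cycle class map under proper pushforward and (flat/birational) pullback. The delicate part of Lemma~\ref{commutativity} arose only because the left vertical $\theta$ (point $\mapsto$ pair of lines through it) and the right vertical $\psi_{\delta_0}$ (cylinder map) were genuinely different correspondences; that asymmetry is absent in the bisection case. What does require a word is that the right vertical arrow is an \emph{isomorphism}, and the paper supplies this by noting that the birational map $\Lambda_\delta:R_\delta\dashrightarrow F_\delta$ of \eqref{birationalofsurface} identifies the two cylinder homomorphisms $\Psi'_\delta$ and $\Psi_\delta$, forcing $\eta_*\mu^*:H_3(Y,\ZZ)\to H_3(X,\ZZ)$ to be an isomorphism. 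Your route through $J(C)$ would also work, but the paper's direct argument is shorter and avoids any torsion bookkeeping.
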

\begin{proof} Still, we only need to show the first assertion. This comes from the commutativity of the following diagram
$$\begin{tikzpicture}
  \matrix (m) [matrix of math nodes,row sep=3em,column sep=4em,minimum width=2em]
  {
     \se(Y/C,\zeta) & J(Y)_\zeta\\
     \bse(X/B, h)& J(X)_{(2,h)} \\};
  \path[-stealth]
    (m-1-1) edge node [right] {$\simeq$} (m-2-1)
            edge node [above] {$\phi_\zeta$} (m-1-2)
    (m-2-1.east|-m-2-2) edge node [below] {~}
            node [above] {$\phi_{(2,h)}$} (m-2-2)
    (m-1-2) edge node [right] {$\simeq$} (m-2-2);
\end{tikzpicture}$$
where the vertical arrows are  induced from the pushforwad $\eta_\ast$ and the pullback  $\mu^\ast$.  Here, the arrow $J(Y)_{\zeta}\rightarrow J(X)_{(2,h)}$ is an isomorphism because we have isomorphisms of homology groups as below:
$$\begin{tikzpicture}
  \matrix (m) [matrix of math nodes,row sep=3em,column sep=4em,minimum width=2em]
  {
     H_3(R_\delta,\ZZ)  & H_3(Y,\ZZ) \\
     H_3(F_\delta, h)& H_3(X,\ZZ)  \\};
  \path[-stealth]
    (m-1-1) edge node [right] {$\simeq$} (m-2-1)
            edge node [above] {$\Psi'_\delta$} (m-1-2)
    (m-2-1.east|-m-2-2) edge node [below] {~}
            node [above] {$\Psi_{\delta}$} (m-2-2)
    (m-1-2) edge node [right] {$\eta_\ast\circ\mu^\ast$} (m-2-2);
\end{tikzpicture}$$
from  \eqref{birationalofsurface}.
 
\end{proof}

Then Theorem \ref{thm:0} follows from Theorem \ref{RCsec} and Theorem \ref{RCbsec}. 

\section{Proof of Theorem \ref{thm:1}}

\noindent{\it Proof of Theorem \ref{thm:1}}.  We  assume that the general fiber of $\pi$ admits a quadric fibration with non-trivial monodromy. Suppose $\pi:\fX\rightarrow T$ is smooth over the open subset $T_0\subseteq T$. Then the section $\talpha$ of $R^4\pi_\ast\ZZ$ is trivial over $T_0$,  and is determined by the cohomology class of bidegree $(d_1,d_2)$ on the smooth fiber $\fX_t, t\in T_0$. 

There exists two divisors $\cH_1, \cH_2$ on $\fX$ such that the restriction of $\cH_i, i=1,2$ to the general fiber is $i_{\fX_t}^\ast \cO_{\PP(V)} (1)$  and a fiber class. Then the restriction of  the cycles $\cH^2_1 $ and $\cH_1\cdot \cH_2$ induces two sections of $R^4\pi_\ast \ZZ$ of class  $(k,2)$ and $(2,0)$ for some $k>0$. As discussed in Corollary \ref{simplecriterion},  one only need to check that the condition ($\ast$) holds for  sections $\talpha$ of $R^4\pi_\ast \ZZ$ modulo group spanned by $[H_1^2]$ and $[H_1H_2]$. Since $[H_1^2]$ and $ [H_1H_2]$ are of class $(2,d)$ and $(2,0)$, this allow us to assume that $\talpha$ is of class $(1,h)$ or $(2,h)$ for some  $h>>0$.

Let us first consider the case where the section $\talpha$ of class $(1,h)$. Take $\cM_{\talpha}$ to be a desingularization of the relative Hilbert scheme of rational curves of bidegree $(1,h)$ in $\fX|_{T_0}$ and  let $\cZ_{\alpha}\subseteq \cM_{\talpha}\times_T \fX$ be the pullback of the universal family.   By Theorem \ref{RCsec}, the map
$$\phi_{\cZ_{\talpha}}:\cM_{\talpha}\rightarrow \cJ_{\talpha}(\fX),$$
is surjective, and  general fibers are rationally connected.
 
 Similarly, for a section $\talpha$ of bidegree $(2,h)$, we choose $\cM_{\talpha}$ to  be a desingularization of the relative Hilbert scheme 
 of genus $\frac{m-2}{2}$ curves in $\fX|_{T_0}$  of bidegree $(2,h)$, passing through all the vertices of the quadric cones in $\fX_t$. By Theorem \ref{RCbsec}, if we take  $\cZ_{\talpha}$ to be the pullback of the universal family,  the induced morphism
 $$ \phi_{\cZ_{\talpha}}:\cM_{\talpha}\rightarrow \cJ_{\talpha}(\fX),$$
is surjective with rationally connected general fibers. This completes the proof.
\qed



\bibliographystyle {plain}
\bibliography{IHC}

\end{document}